\definecolor{gr}{rgb}{0.7, 0.0, 0.15}
\newtheorem{theorem}{\bf Theorem}[section]
\newtheorem{corollary}{\bf Corollary}[section]
\newtheorem{remark}{\bf Remark}[section]
\newtheorem{remarks}{\bf Remarks}[section]
\newtheorem{example}{\bf Example}[section]
\newcommand\floor[1]{\left\lfloor#1\right\rfloor}
\newcommand\ceil[1]{\left\lceil#1\right\rceil}
\numberwithin{equation}{section}
\begin{document}
\title{Approximations to Weighted Sums of Random Variables}
\author[ ]{{\Large Amit N. Kumar}}
\affil[ ]{Department of Mathematics}
\affil[ ]{Indian Institute of Technology Bombay}
\affil[ ]{Powai, Mumbai-400076, India.}
\affil[ ]{Email: amit.kumar2703@gmail.com}
\date{}
\maketitle

\begin{abstract}
\noindent
In this paper, we obtain error bound for pseudo-binomial and negative binomial approximations to weighted sums of locally dependent random variables, using Stein's method. We also discuss approximation results for weighted sums of independent random variables. We demonstrate our results through some applications in finance and runs in statistics.
\end{abstract}

\noindent
\begin{keywords}
Locally dependent random variables; pseudo-binomial and negative binomial approximations;  error bounds; Stein's method.
\end{keywords}\\
{\bf MSC 2010 Subject Classifications:} Primary: 62E17, 62E20; Secondary: 60F05, 60E05.

\section{Introduction and Preliminaries}
Sums of random variables (rvs) have always a special attraction as it raises in relevant theoretical challenges. Moreover, several linear statistics can be represented as weighted sums of rvs. Also, it is difficult to find the exact distribution of weighted sums of rvs, especially, if the underlying rvs are non-identical. So, It is of interest to study the behaviour of such distributions. Many researchers studied the limiting behavior of weighted sums of rvs such as Chow and Lai \cite{CL1973}, Olvera-Cravioto \cite{OCM2012}, and Zhengyan \cite{ZL1989}, among many others. But if weights are natural numbers, then it is also difficult to get asymptotic limits. Therefore, the study of the proximity of such distributions with a suitable distribution is of interest when the summation is taken over a finite set.\\
In this paper, we consider weighted sums of $\mathbb{Z}_+$-valued rvs, where $\mathbb{Z}_+=\{0,1,2,\ldots\}$, the set of non-negative integers, and propose its approximation with pseudo-binomial and negative binomial distributions by matching the first two moments. Also, we assume weights are natural numbers. Let $X_1\sim$ PB$(N,p)$, the pseudo-binomial distribution (see \v{C}ekanavi\v{c}ius and Roos \cite{CR}, p. 370), and $X_2\sim$ NB$(r,\bar{p})$, the negative binomial distribution, then their probability mass functions are given by
\begin{align*}
\mathbb{P}(X_1=k)=\frac{1}{\delta}\binom{N}{k}{p}^{k}q^{N-k}, \quad k=0,1,\ldots,\floor{N}
\end{align*} 
and
\begin{align*}
\mathbb{P}(X_2=k)=\binom{r+k-1}{k}\bar{p}^r \bar{q}^{k}, \quad k=0,1,\ldots,
\end{align*}
respectively, where $N>1$, $r>0$, $0<q=1-p<1$, $0<\bar{q}=1-\bar{p}<1$, $\delta=\sum_{k=0}^{\floor{N}}\binom{N}{k}{p}^{k}q^{N-k}$, $\binom{N}{k}=N(N-1)\cdots (N-k+1)/k!$, and $\floor{N}$ is the greatest integer function of $N$. The study of asymptotic behavior for weighted sums of rvs is discussed widely in the literature under certain conditions on weights, such as the sum of squares of weights is finite (see Chow and Lai \cite{CL1973}), weights are normalized (see Etemadi \cite{EN2006}) and geometrically weighted (see Bhati and Rattihalli \cite{BR2014}), among many others. However, we consider weights are natural numbers which do not satisfy these type of conditions and obtain error bounds for pseudo-binomial and negative binomial approximations. This study of proximity is useful to identify the behavior of such distributions over a finite set. We use the total variation distance metric and Stein's method to derive our approximation results.\\
Next, let ${\cal G}=\{g:{\mathbb Z}_+\to{\mathbb R}|~g~\text{is bounded}\}$ and ${\cal G}_Y=\{g\in {\cal G}|~g(0)=0~\text{and}~g(y)=0,~\text{for}~y\notin {\bf \text{S}}(Y)\}$, for a $\mathbb{Z}_+$-valued rv $Y$, where {\bf S}$(Y)$ is the support of the rv $Y$. We discuss briefly Stein's method (Stein \cite{stein1972}) which can be carried out mainly in the three steps. First, compute a Stein operator $\mathcal{A}_Y$ satisfies ${\mathbb E}[{\cal A}_Y g(Y)]=0$, for $g \in {\cal G}_Y$. Second, find the solution of Stein equation
\begin{equation}
{\cal A}_Yg(k)=f(k)-{\mathbb E}f(Y),~f \in {\cal G}~\text{and}~g \in {\cal G}_Y.\label{8:seq}
\end{equation}
Finally, use a rv $Z$ in place of $k$ in \eqref{8:seq} and take expectation and supremum which leads to the total variation distance between $Y$ and $Z$ as follows:
\begin{equation}
d_{TV}(Y,Z):=\sup_{f \in {\cal H}}|{\mathbb E}f(Y)-{\mathbb E}f(Z)|=\sup_{f \in {\cal H}}|{\mathbb E}{\cal A}_Yg(Z)|,\label{8:power}
\end{equation}
where ${\cal H}=\{I(A)| A \subseteq {\mathbb Z}_+\}$ and $I(A)$ is the indicator function of the set $A$.

\noindent 
Next, consider a rv $X$ and its Stein operator of the form
\begin{align}
\mathcal{A}_Xg(k)=(\alpha+\beta k)g(k+1)-kg(k), \quad k\in \mathbb{Z}_+,~g\in \mathcal{G}_X, \label{8:com}
\end{align}
which represent pseudo-binomial Stein operator if $\alpha=Np/q$ and $\beta=-p/q$ and negative binomial Stein operator if $\alpha=r \bar{q}$ and $\beta=\bar{q}$, respectively. For details, see (5) and (6) of Upadhye {\em et al.} \cite{UCV}. Also, the upper bound for the solution of \eqref{8:seq} (say $g_f$) is given by
\begin{align}
\|\Delta g_f\|\le \left\{
\begin{array}{ll}
1/\floor{N}p, & \text{if $X\sim$ PB$(N,p)$;}\\
1/r \bar{q}, & \text{if $X\sim$ NB$(r,\bar{p})$},
\end{array}\right.\label{8:bd}
\end{align}
where $\Delta g_f(k)=g_f(k+1)-g_f(k)$ and $\|\Delta g_f\|=\sup_k |\Delta g_f(k)|$. See (2.8), (2.10), and (2.11) of Kumar {\em et al.} \cite{KUV} and (57) of \v{C}ekanavi\v{c}ius and Roos \cite{CR} for more details. Observe that
\begin{align*}
\frac{\alpha}{1-\beta}=\left\{
\begin{array}{ll}
N p, & \text{if $X\sim$ PB$(N,p)$};\\
r \bar{q}/\bar{p}, & \text{if $X\sim$ NB$(r,\bar{p})$}
\end{array}\right.
\quad \text{and}\quad
\frac{\alpha}{(1-\beta)^2}=\left\{
\begin{array}{ll}
N pq, & \text{if $X\sim\text{PB}(N,p)$;}\\
r \bar{q}/\bar{p}^2, & \text{if $X\sim\text{NB}(r,\bar{p})$}
\end{array}\right.
\end{align*}
are mean and variance of pseudo-binomial and negative binomial distributions, respectively. For more details, we refer the reader to Brown and Xia \cite{BX}, Eichelsbacher and Reinert \cite{ER}, Kumar {\em et al.} \cite{KUV}, Ley {\em et al.} \cite{LRS}, Upadhye and Barman \cite{NK}, Upadhye {\em et al.} \cite{UCV}, and references therein.\\
This paper is organized as follows. In Section \ref{8:MR}, we present our main results and discuss some relevant remarks and applications. In Section \ref{8:pr}, we give the proofs of our main results.

\section{Main Results}\label{8:MR}
Let $J\subset \mathbb{N}=\{1,2,\ldots\}$ be finite and $\{\eta_i,i\in J\}$ be a collection of $\mathbb{Z}_+$-valued random variables. Also, for each $i$, let $w_i\in \mathbb{N}$, $\mathbb{E}(\omega_i \eta_i)^3<\infty$, and $i\in A_i\subseteq B_i\subset J$ be such that $\eta_i$ is independent of $\eta_{A_i^c}$ and $\eta_{A_i}$ is independent of $\eta_{B_i^c}$, where $\eta_A$ is the collection of random variables $\{\eta_i,i\in A\}$ and $A^c$ denotes the complement of the set $A$. See Section 3 of R\"{o}llin \cite{RO2008} for a similar type of locally dependent structure. In addition, if $A_i=B_i=\{i\}$ then our locally dependent structure reduced to the independent collection of random variables. Now, let $w_i=1$ for at least one $i\in J$ and define
\begin{align}
W:=\sum_{i\in J}\omega_i \eta_i,\label{8:Wn}
\end{align} 
the weighted sum of locally dependent random variables. For a set $A\subset J$, we let $\eta_{A}^{*}=\sum_{i\in A}\omega_i \eta_i$. For any random variables $Z$, we define $\mathcal{D}(Z):=2d_{TV}(\mathcal{L}(Z),\mathcal{L}(Z+1))$. Throughout this section, let $X$ be a random variable having Stein operator \eqref{8:com} and
\begin{align}
\alpha=\frac{(\mathbb{E}W)^2}{\mathrm{Var}(W)}\quad \text{and}\quad \beta=\frac{\mathrm{Var}(W)-\mathbb{E}W}{\mathrm{Var}(W)}\label{8:MM}
\end{align}
so that $\mathbb{E}X=\mathbb{E}W$ and $\mathrm{Var}(X)=\mathrm{Var}(W)$.

\subsection{Locally Dependent Random Variables}
In this subsection, we consider $\{i\}\subset A_i\subset B_i$ and discuss the approximation result for the weighted sum of locally dependent random variables.
\begin{theorem}\label{8:th1}
Let $W$ be the weighted sum of locally random variables as defined in \eqref{8:Wn} and $X$ be a random variable having Stein operator \eqref{8:com} satisfying \eqref{8:MM}. Then
\begin{align}
d_{TV}(\mathcal{L}(W),\mathcal{L}(X))&\le \|\Delta g\|\left\{ \frac{(1-\beta)}{2}\left[\sum_{i\in J} \omega_i\mathbb{E}\eta_i \mathbb{E}[\eta_{A_i}^*(2\eta_{B_i}^{*}-\eta_{A_i}^{*}-1)\mathcal{D}(W|\eta_{A_i},\eta_{B_i})]\right.\right.\nonumber\\
&~~~~~~~~\left.+\sum_{i\in J}\omega_i\mathbb{E}[\eta_i(\eta_{A_i}^{*}-1)(2\eta_{B_i}^{*}-\eta_{A_i}^{*}-2)\mathcal{D}(W|\eta_i,\eta_{A_i},\eta_{B_i})]\right]\nonumber\\
&~~~~~~~~~+\sum_{i\in J}\omega_i\bigr|(1-\beta)[\mathbb{E}(\eta_i) \mathbb{E}(\eta_{A_i}^*)-\mathbb{E}(\eta_i(\eta_{A_i}^{*}-1))]+\beta\mathbb{E}(\eta_i)\bigr|\mathbb{E}[\eta_{B_i}^{*}\mathcal{D}(W|\eta_{B_i})]\nonumber\\
&~~~~~~~~~\left.+|\beta| \sum_{i\in J} \omega_i\mathbb{E}[\eta_i(\eta_{B_i}^{*}-1)\mathcal{D}(W|\eta_{B_i})]\right\},\label{8:mbdd}
\end{align}
where the upper bound of $\|\Delta g\|$ is given in \eqref{8:bd}.
\end{theorem}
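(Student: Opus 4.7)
By the Stein identity \eqref{8:power}, it suffices to produce a uniform (in $f\in\mathcal H$) bound for $|\mathbb{E}\mathcal{A}_X g_f(W)|$. Using \eqref{8:com} together with the moment-matching \eqref{8:MM} (equivalently $\alpha=(1-\beta)\mathbb{E}W$), the first step is to rewrite
\[
\mathbb{E}\mathcal{A}_X g(W)=\sum_{i\in J}\omega_i\Big\{(1-\beta)\mathbb{E}\eta_i\cdot\mathbb{E}g(W+1)+\beta\mathbb{E}[\eta_i g(W+1)]-\mathbb{E}[\eta_i g(W)]\Big\},
\]
so that each summand isolates the contribution of a single index $i$, where the local dependence structure can be brought to bear.

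For a fixed $i$, introduce the auxiliary sums $W^{(i)}_A:=W-\eta_{A_i}^{*}$ and $W^{(i)}_B:=W-\eta_{B_i}^{*}$, which are functions of $\eta_{A_i^c}$ and $\eta_{B_i^c}$ respectively; the hypotheses therefore make $\eta_i$ independent of $W^{(i)}_A$ and $\eta_{A_i}$ independent of $W^{(i)}_B$. The plan is to use the telescoping identity $g(m+n)-g(m)=\sum_{k=0}^{n-1}\Delta g(m+k)$ twice: first to express $g(W)$ and $g(W+1)$ as $g(W^{(i)}_A+\cdot)$ plus $\eta_{A_i}^{*}$ increments of $\Delta g$, and then a second time to interpolate the resulting $\Delta g$-values from arguments based at $W^{(i)}_A$ down to $W^{(i)}_B$, which costs a further $\eta_{B_i}^{*}-\eta_{A_i}^{*}$ increments. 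After factoring out $\eta_i$ and $\eta_{A_i}^{*}$ using the corresponding independences, the arithmetic-progression identity $\sum_{k=0}^{n-1}(m+k)=n(2m+n-1)/2$ produces the combinatorial weights $\eta_{A_i}^{*}(2\eta_{B_i}^{*}-\eta_{A_i}^{*}-1)/2$ and $\eta_i(\eta_{A_i}^{*}-1)(2\eta_{B_i}^{*}-\eta_{A_i}^{*}-2)/2$ visible in \eqref{8:mbdd}. Each remaining difference of the form $|\mathbb{E}[h(Z)-h(Z+1)\mid\mathcal F]|$ is then controlled by $\|h\|\cdot\mathcal{D}(Z\mid\mathcal F)$, applied with $h=\Delta g$ (so that the overall factor $\|\Delta g\|$ comes out) and with $\mathcal F=\sigma(\eta_{B_i})$, $\sigma(\eta_{A_i},\eta_{B_i})$, or $\sigma(\eta_i,\eta_{A_i},\eta_{B_i})$ depending on which variables have already been frozen by conditioning at that stage.

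The main obstacle is the combinatorial bookkeeping: matching the level of conditioning used in each application of the $\mathcal{D}$-inequality to the combinatorial factor produced by the double telescoping, and arranging the $(1-\beta)$- and $\beta$-contributions so that the first-order remainders assemble into the single coefficient $(1-\beta)[\mathbb{E}\eta_i\mathbb{E}\eta_{A_i}^{*}-\mathbb{E}(\eta_i(\eta_{A_i}^{*}-1))]+\beta\mathbb{E}\eta_i$ multiplying $\mathbb{E}[\eta_{B_i}^{*}\mathcal{D}(W|\eta_{B_i})]$, with the residual $|\beta|\,\mathbb{E}[\eta_i(\eta_{B_i}^{*}-1)\mathcal{D}(W|\eta_{B_i})]$ left standing alone, exactly as displayed in \eqref{8:mbdd} rather than in some looser grouping. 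Applying the triangle inequality, inserting the uniform estimate \eqref{8:bd} on $\|\Delta g\|$, and taking $\sup_{f\in\mathcal H}$ in \eqref{8:power} then yield \eqref{8:mbdd}.
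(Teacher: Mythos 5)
Your proposal follows essentially the same route as the paper's proof: the same decomposition of $\mathbb{E}\mathcal{A}_Xg(W)$ via $\alpha=(1-\beta)\mathbb{E}W$, the same two auxiliary sums $W-\eta_{A_i}^{*}$ and $W-\eta_{B_i}^{*}$ with double telescoping into second differences $\Delta^2 g$, the same cancellation of the first-order remainder through the variance-matching identity (which is exactly why the coefficient $(1-\beta)[\mathbb{E}\eta_i\mathbb{E}\eta_{A_i}^{*}-\mathbb{E}(\eta_i(\eta_{A_i}^{*}-1))]+\beta\mathbb{E}\eta_i$ sums to zero over $i$ and can be traded for a second-difference term), and the same conditional smoothing bound $|\mathbb{E}[\Delta^2 g(\cdot)\mid\mathcal F]|\le\|\Delta g\|\,\mathcal D(W\mid\mathcal F)$ with the arithmetic-progression counts producing the stated combinatorial weights. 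The plan is correct as stated and, once the bookkeeping you describe is written out, reproduces the paper's argument.
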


\begin{remarks}
\begin{itemize}
\item[(i)] Observe that $W$ can be represented as a conditional sum of independent random variables. Therefore, Subsections 5.3 and 5.4 of R\"{o}llin \cite{RO2008} are useful to find the upper bound of $\mathcal{D}(W|\cdot)$.
\item[(ii)] The choice of parameters in \eqref{8:MM} is valid if $\mathbb{E}W>\mathrm{Var}(W)$ and $\mathbb{E}W<\mathrm{Var}(W)$ for pseudo-binomial and negative binomial approximations, respectively.
\end{itemize}
\end{remarks}

\noindent
Next, we discuss some applications of Theorem \ref{8:th1}.

\begin{example}[{\bf (1,1)-runs}]\label{8:ex1}
Let $J=\{1,2,\ldots,n\}$ and $\{\zeta_i,i \in J\}$ be a sequence of independent Bernoulli trials with success probability $p_i=\mathbb{P}(\zeta_i=1)=1-\mathbb{P}(\zeta_i=0)$. Also, let $A_i=\{j:|j-i|\le 1\}\cap J$, $B_i=\{j:|j-i|\le 2\}\cap J$, $\bar{\zeta}_i=(1-\zeta_{i-1})\zeta_i$, and $W_n=\sum_{i=2}^{n}\bar{\zeta}_i$. Then, the distribution of $W_n$ is known as the distribution of $(1,1)$-runs and it adopted our locally dependent structure with $\omega_i=1$. For more details, see Huang and Tsai \cite{HT}, Upadhye {et al.} \cite{UCV}, Vellaisamy \cite{V}, and reference therein.\\
Next, it can be easily verified that 
\begin{align*}
\sum_{i=2}^{n}(1-p_{i-1})p_i=\mathbb{E}(W_n)>\mathrm{Var}(W_n)=\sum_{i=2}^{n}(1-p_{i-1})p_i-\sum_{i=2}^{n}\sum_{j\in A_i}(1-p_{i-1})(1-p_{j-1})p_ip_j.
\end{align*} 
Therefore, the pseudo-binomial approximation to $W_n$ is suitable in the view of the valid choice of parameters. Now, let $\mathcal{D}(W_i^\star)=\mathcal{D}(W_n|\bar{\zeta}_{B_i})$ and $\bar{\zeta}_{e}=\{\bar{\zeta}_{2k},1\le k\le\floor{n/2}\}$ then $\mathcal{L}(W_i^\star|\bar{\zeta}_e=k)$ can be represented as the sum of independent random variables (say $\zeta_i^{k}$, $i\in\{1,2,\ldots,n_k\}=:\mathcal{F}_k$ and $k\in\{0,1\}^{\floor{n/2}}$), and therefore, from (5.11) if R\"{o}llin \cite{RO2008}, we have
\begin{align*}
\mathcal{D}(W_i^\star)\le \mathbb{E}\{\mathbb{E}[W_i^\star|\bar{\zeta}_e]\}\le \mathbb{E}\left[\frac{2}{V_{\bar{\zeta}_e}^{1/2}}\right], 
\end{align*}
where $V_{\bar{\zeta}_e=k}=\sum_{j \in \mathcal{F}_k}\min\left\{1/2,1-\mathcal{D}(\zeta_j^k)\right\}$. Let $1/2 \ge \mathbb{P}(\zeta_i^k=1)=1-\mathbb{P}(\zeta_i^k=0)=\sum_{k_1,k_2\in\{0,1\}}$ $\mathbb{P}(\bar{\zeta}_i=1|\bar{\zeta}_{i-1}=k_1,\bar{\zeta}_{i+1}=k_2)=\mathbb{P}(\bar{\zeta}_i=1|\bar{\zeta}_{i-1}=0,\bar{\zeta}_{i+1}=0)=(1-p_{i-1})p_i$. Therefore,
\begin{align*}
1-\mathcal{D}(\zeta_i^k)&=1-\frac{1}{2}\sum_{m=0}^{1}|\mathbb{P}(\zeta_i^k=m-1)-\mathbb{P}(\zeta_i^k=m)|\\
&=1-\frac{1}{2}\{2\mathbb{P}(\zeta_i^k=0)-\mathbb{P}(\zeta_i^k=1)\}=\frac{3}{2}(1-p_{i-1})p_i.
\end{align*}
Hence, $V_{\bar{\zeta}_e=k}=\frac{1}{2}\sum_{j \in \mathcal{F}_k}\min\left\{1,3(1-p_{i-1})p_i\right\}$ and, from Theorem \ref{8:th1} with the pseudo-binomial setting, we have
\begin{align*}
d_{TV}(\mathcal{L}(W_n),\text{PB}(\hat{n},\hat{p}))&\hspace{-0.01cm}\le \frac{1}{\floor{\hat{n}}\hat{p}\hat{q}}\left\{\sum_{i=2}^{n}\big[ \mathbb{E}\bar{\zeta}_i \mathbb{E}[\bar{\zeta}_{A_i}^*(2\bar{\zeta}_{B_i}^{*}-\bar{\zeta}_{A_i}^{*}-1)]+\mathbb{E}[\bar{\zeta}_i(\bar{\zeta}_{A_i}^{*}-1)(2\bar{\zeta}_{B_i}^{*}-\bar{\zeta}_{A_i}^{*}-2)]\big]\right.\\
&~~~~~~~~~~~~~~~~~~~+2\sum_{i=2}^n\bigr|\mathbb{E}(\bar{\zeta}_i) \mathbb{E}(\bar{\zeta}_{A_i^*})-\mathbb{E}(\bar{\zeta}_i(\bar{\zeta}_{A_i}^{*}-1))-\hat{p}\mathbb{E}(\bar{\zeta}_i)\bigr|\mathbb{E}[\bar{\zeta}_{B_i}^{*}]\\
&~~~~~~~~~~~~~~~~~~~\left.+2\hat{p} \sum_{i=2}^{n}\mathbb{E}[\bar{\zeta}_i(\bar{\zeta}_{B_i}^{*}-1)]\right\}\left(\frac{1}{2}\sum_{j \in \mathcal{F}}\min\left\{1,3(1-p_{i-1})p_i\right\}\right)^{-1/2},
\end{align*}
where $\hat{n}=\left(\sum_{i=2}^{n}(1-p_{i-1})p_i\right)^2/\sum_{i=2}^{n}\sum_{j\in A_i}(1-p_{i-1})(1-p_{j-1})p_ip_j$, $\hat{p}=\sum_{i=2}^{n}\sum_{j\in A_i}(1-p_{i-1})(1-p_{j-1})p_ip_j/\sum_{i=2}^{n}(1-p_{i-1})p_i$, and $\mathcal{F}=\min_{k}\mathcal{F}_k$. Note that the above bound is of $O(n^{-1/2})$ which is an improvement over (77) of Upadhye {et al.} \cite{UCV}, Theorem 2.1 of Vellaisamy \cite{V}, which are of $O(1)$, and Theorem 2.1 of Godbole \cite{G1993}, which is of order $O(n)$.
\end{example}

\begin{example}[{\bf Collateralized Debt Obligation (CDO)}]\label{8:ex2}
A CDO is a type of asset-backed security that transferred pool of assets into a product and sold to investors. These assets divided into a set of repayment which is called tranches. The tranches have different payment priorities and interest rates. The basic tranches used in CDO are senior, mezzanine, and equity. Investors can invest in their interested tranches. For more details, see Neammanee and Yonghint \cite{NY}, Yonghint et al. \cite{YNC2}, and reference therein.\\
In Yonghint et al. \cite{YNC2}, it is demonstrated that the locally dependent CDO occurs in the borrower's related assets that arise from several groups. If the element of groups have weights in terms of economy. Then the weighted locally dependent CDO is also useful in applications.\\
We consider the CDO similar to discussed by Yonghint et at. \cite{YNC2}. Let the CDO tranche pricing is based on $\bar{n}$ assets and the recovery rate of $i^{\text{th}}$ assets is $R_i>0$. The percentage cumulative loss in CDO up to time $T$ is 
\begin{align*}
L(T)=\frac{1}{\bar{n}}\sum_{i=1}^{\bar{n}}(1-R_i)\omega_i I_i,
\end{align*}
where $I_i=I(\tau_i \le T)$, and $\tau_i$ is the default time of the $i^{\text{th}}$ asset. Assume the recovery rate is constant, say $R$, then the CDO pricing problem is reduced to calculate
\begin{align}
\mathbb{E}[(L(T ) - z^*)^+]=\frac{1-R}{\bar{n}}\mathbb{E}[(\overline{W}_{\bar{n}}-z)^+],\label{8:jjj}
\end{align} 
where $z^*=(1-R)z/\bar{n}>0$ is the attachment or the detachment point of the tranche, $\overline{W}_{\bar{n}}=\sum_{i=1}^{\bar{n}} \omega_i I_i$, and $(a)^+=\max(a,0)$. Note that, from \eqref{8:jjj}, it is sufficient to deal with $\mathbb{E}[(\overline{W}_{\bar{n}}-z)^+]$. For additional details, see Yonghint et al. \cite{YNC2} and reference therein.\\ 
We are interested to approximate $\mathbb{E}[(\overline{W}_{\bar{n}}-z)^+]$ by $\mathbb{E}[(\text{PB}(N,p)-z)^+]$. First, let us modify the Stein equation \eqref{8:seq} as
\begin{align}
(k-z)^+-\mathbb{E}[(\text{PB}(N,p)-z)^+]=\mathcal{A}g(k).\label{8:seqq}
\end{align}
Here, $f:\mathbb{Z}_+\to \mathbb{R}$ such that $f(k)=(k-z)^+$. Using the rv $\overline{W}_{\bar{n}}$ in place of $k$ and taking expectation, we get
\begin{align*}
\mathbb{E}[(\overline{W}_{\bar{n}}-z)^+]-\mathbb{E}[(\text{PB}(N,p)-z)^+]=\mathbb{E}[\mathcal{A}g(\overline{W}_{\bar{n}})].
\end{align*} 
Therefore, it is enough to deal with the right-hand side, that is, $\mathbb{E}[\mathcal{A}g(\overline{W}_{\bar{n}})]$.\\ 
Next, we move to find the upper bound for $\|\Delta g\|$. Following the steps similar to Lemma 1 of Neammanee and Yonghint \cite{NY}, for $z\ge 0$, we have
\begin{align}
\mathbb{E}[(\text{PB}(N,p)-z)^+]=\frac{1}{\delta}\sum_{m=1}^{\floor{N}}(m-z)^+\binom{N}{m}p^m q^{N-m}\le \frac{1}{\delta}\sum_{m=1}^{\floor{N}}m\binom{N}{m}p^m q^{N-m}= Np.\label{8:31}
\end{align}
It can be easily verified that \eqref{8:seqq} has a solution 
\begin{align*}
g(k)=-\sum_{j=k}^{\floor{N}}\frac{[N(N-1)\cdots (N-j+1)](k-1)!}{[N(N-1)\cdots (N-k+1)]j!}\left(\frac{p}{q}\right)^{j-k}\big((j-z)^+-\mathbb{E}[(\text{PB}(N,p)-z)^+]\big),
\end{align*}
for $k\ge 1$. For details, see (2.6) of Eichelsbacher and Reinert \cite{ER}. Now, following the steps similar to Lemma 2 Neammanee and Yonghint \cite{NY}, for $k\ge 1$, we get
\begin{align}
0<\sum_{j=k}^{\floor{N}}\frac{[N(N-1)\cdots (N-j+1)](k-1)!}{[N(N-1)\cdots (N-k+1)]j!}\left(\frac{p}{q}\right)^{j-k}(j-z)^+&\le 1+\sum_{j=1}^{\floor{N-k}}\binom{N-k}{j}\left(\frac{p}{q}\right)^j\nonumber\\
&\le q^{\ceil{k-N}}\le q^{\ceil{1-N}},\label{8:12}
\end{align}
where $\ceil{x}$ denote the least integer more than or equal to $x$. Also, for $k\le Np$, we have
\begin{align}
 \sum_{j=k}^{\floor{N}}\frac{[N(N-1)\cdots (N-j+1)](k-1)!}{[N(N-1)\cdots (N-k+1)]j!}\left(\frac{p}{q}\right)^{j-k}&\le \sum_{j=k}^{\floor{N}}\frac{(N-k)\cdots(N-j+1)}{(j-k+1)!}\left(\frac{p}{q}\right)^{j-k}\nonumber\\
&\le \frac{1}{N-k+1}\sum_{j=0}^{\floor{N-k}}\binom{N-k+1}{j+1}\left(\frac{p}{q}\right)^j\nonumber\\
&\le \frac{q^{\ceil{k-N}}-q}{(N-k)p}\le \frac{q^{-\ceil{N}}}{Np} \label{8:20}
\end{align}
and, for $k\ge np$,
\begin{align}
\sum_{j=k}^{\floor{N}}\frac{[N(N\hspace{-0.05cm}-\hspace{-0.05cm}1)\cdots (N\hspace{-0.05cm}-\hspace{-0.05cm}j\hspace{-0.05cm}+\hspace{-0.05cm}1)](k\hspace{-0.05cm}-\hspace{-0.05cm}1)!}{[N(N-1)\cdots (N-k+1)]j!}\left(\frac{p}{q}\right)^{j-k}\hspace{-0.15cm}&=\frac{1}{k}\left(\hspace{-0.1cm}1\hspace{-0.1cm}+ \hspace{-0.25cm}\sum_{j=k+1}^{\floor{N}}\frac{[N(N\hspace{-0.05cm}-\hspace{-0.05cm}1)\cdots (N\hspace{-0.05cm}-\hspace{-0.05cm}j\hspace{-0.05cm}+\hspace{-0.05cm}1)]k!}{[N(N\hspace{-0.05cm}-\hspace{-0.05cm}1)\cdots (N\hspace{-0.05cm}-\hspace{-0.05cm}k\hspace{-0.05cm}+\hspace{-0.05cm}1)]j!}\left(\frac{p}{q}\right)^{j-k}\right)\nonumber\\
&\le \frac{1}{k}\left(1+ \sum_{j=1}^{\floor{N-k}}\binom{N-k}{j}\left(\frac{p}{q}\right)^{j}\right)\le \frac{q^{-\ceil{N}}}{Np}.\label{8:21}
\end{align}
Combining \eqref{8:20} and \eqref{8:21}, for $k\ge 1$, we have
\begin{align}
0< \sum_{j=k}^{\floor{N}}\frac{[N(N-1)\cdots (N-j+1)](k-1)!}{[N(N-1)\cdots (N-k+1)]j!}\left(\frac{p}{q}\right)^{j-k} \le \frac{q^{-\ceil{N}}}{Np}.\label{8:32}
\end{align}
Therefore, from \eqref{8:31} and \eqref{8:32}, we have
\begin{align}
 \sum_{j=k}^{\floor{N}}\frac{[N(N-1)\cdots (N-j+1)](k-1)!}{[N(N-1)\cdots (N-k+1)]j!}\left(\frac{p}{q}\right)^{j-k}\mathbb{E}[(\text{PB}(N,p)-z)^+]\le q^{-\ceil{N}}.\label{8:13}
\end{align}
Next, observe that
\begin{align*}
\Delta g(k)=g(k+1)-g(k)=C_k+D_k,
\end{align*}
where
\vspace{-0.34cm}
\begin{align*}
C_k=&\sum_{j=k}^{\floor{N}}\frac{[N(N-1)\cdots (N-j+1)](k-1)!}{[N(N-1)\cdots (N-k+1)]j!}\left(\frac{p}{q}\right)^{j-k}(j-z)^+\\
&-\sum_{j=k+1}^{\floor{N}}\frac{[N(N-1)\cdots (N-j+1)]k!}{[N(N-1)\cdots (N-k)]j!}\left(\frac{p}{q}\right)^{j-k-1}(j-z)^+
\end{align*}
and
\vspace{-0.34cm}
\begin{align*}
D_k=&\sum_{j=k+1}^{\floor{N}}\frac{[N(N-1)\cdots (N-j+1)]k!}{[N(N-1)\cdots (N-k)]j!}\left(\frac{p}{q}\right)^{j-k-1}\mathbb{E}[(\text{PB}(N,p)-z)^+]\\
&-\sum_{j=k}^{\floor{N}}\frac{[N(N-1)\cdots (N-j+1)](k-1)!}{[N(N-1)\cdots (N-k+1)]j!}\left(\frac{p}{q}\right)^{j-k}\mathbb{E}[(\text{PB}(N,p)-z)^+].
\end{align*}
Using \eqref{8:12} and \eqref{8:13}, we have
\begin{align*}
|\Delta g|\le |C_k|+|D_k|\le q^{-\ceil{N}}(1+q).
\end{align*}
Hence, from \eqref{8:mmmd}, we have
\begin{align}
\left|\mathbb{E}[\mathscr{A}g(\overline{W}_{\bar{n}})]\right|&\le  \frac{(1+q)}{q^{\ceil{N}+1}}\left\{ \frac{1}{2}\left[\sum_{i=1}^{\bar{n}} \omega_i\mathbb{E}I_i \mathbb{E}[I_{A_i}^*(2I_{B_i}^{*}-I_{A_i}^{*}-1)\mathcal{D}(\overline{W}_{\bar{n}}|I_{A_i},I_{B_i})]\right.\right.\nonumber\\
&~~~~~~~~~~~~~~~~~~~~~~\left.+\sum_{i=1}^{\bar{n}}\omega_i\mathbb{E}[I_i(I_{A_i}^{*}-1)(2I_{B_i}^{*}-I_{A_i}^{*}-2)\mathcal{D}(\overline{W}_{\bar{n}}|I_i,I_{A_i},I_{B_i})]\right]\nonumber\\
&~~~~~~~~~~~~~~~~~~~~~~~+\sum_{i=1}^{\bar{n}}\omega_i\bigr|\mathbb{E}(I_i) \mathbb{E}(I_{A_i^*})-\mathbb{E}(I_i(I_{A_i}^{*}-1))-p\mathbb{E}(I_i)\bigr|\mathbb{E}[I_{B_i}^{*}\mathcal{D}(W|I_{B_i})]\nonumber\\
&~~~~~~~~~~~~~~~~~~~~~~~\left.+p\sum_{i=1}^{\bar{n}} \omega_i\mathbb{E}[I_i(I_{B_i}^{*}-1)\mathcal{D}(\overline{W}_{\bar{n}}|I_{B_i})]\right\},\label{8:15}
\end{align}
where $q=1-p=\mathrm{Var}(\overline{W}_{\bar{n}})/\mathbb{E}\overline{W}_{\bar{n}}$, $N=(\mathbb{E}\overline{W}_{\bar{n}})^2/(\mathbb{E}\overline{W}_{\bar{n}}-\mathrm{Var}(\overline{W}_{\bar{n}}))$, $I_{A}^{*}=\sum_{i\in A}\omega_i I_i$, $I_{A}$ is the collection of random variables $\{I_i:i\in A\}$, for $A\subset \{1,2,\ldots,\bar{n}\}$, and $\mathcal{D}(\overline{W}_{\bar{n}}|\cdot)$ can be computed subject to the exact structure of dependency. For example, if the dependency structure is the same as discussed in Example \ref{8:ex1} and $\omega_i=1$, $1\le i \le n$, then 
\begin{align*}
\left|\mathbb{E}[\mathscr{A}g(\overline{W}_{\bar{n}})]\right|&\le \frac{(1+q)}{q^{\ceil{N}+1}}\left\{\sum_{i=1}^{\bar{n}}\big[ \mathbb{E}I_i \mathbb{E}[I_{A_i}^*(2I_{B_i}^{*}-I_{A_i}^{*}-1)]+\mathbb{E}[I_i(I_{A_i}^{*}-1)(2I_{B_i}^{*}-I_{A_i}^{*}-2)]\big]\right.\\
&~~~~~~~~~~~~~~~~~~~~~~~+2\sum_{i=1}^{\bar{n}}\bigr|\mathbb{E}(I_i) \mathbb{E}(I_{A_i^*})-\mathbb{E}(I_i(I_{A_i}^{*}-1))-p\mathbb{E}(I_i)\bigr|\mathbb{E}[I_{B_i}^{*}]\\
&~~~~~~~~~~~~~~~~~~~~~~~\left.+2p \sum_{i=1}^{\bar{n}}\mathbb{E}[I_i(I_{B_i}^{*}-1)]\right\}\left(\frac{1}{2}\sum_{j \in \mathcal{F}}\min\left\{1,3(1-\mathbb{E}(I_{i-1}))\mathbb{E}I_i\right\}\right)^{-1/2},
\end{align*}
which is an improvement over the bound given in Theorem 2(1) of Yonghint et al. \cite{YNC2}.
\end{example}

\subsection{Independent Random Variables}
In this subsection, we consider $B_i=A_i=\{i\}$ in the earlier discussed setup and obtain approximation results for $W^*=\sum_{i\in J}\omega_i \eta_i$, the weighted sum of independent random variables. To simplify the presentation, let us define $p_i(k):=\mathbb{P}(\eta_i=k)$ and $\gamma:=2 \max_{i \in J}d_{TV}(W_i,W_i+1)$ where $W_i=W^*-\omega_i \eta_i$.

\newpage
\begin{theorem}\label{8:th2}
Let $W^*$ be the weighted sum of independent random variables and $X$ be a random variable having Stein operator \eqref{8:com} satisfying \eqref{8:MM}. Then
\begin{align}
d_{TV}(\mathcal{L}(W^*),\mathcal{L}(X))\le \gamma \|\Delta g\| \sum_{i\in J}\omega_i\left(\sum_{k=1}^{\infty}h_i(k)+d_i\right),\label{8:ibd}
\end{align}
where $d_i=\mathbb{E}(\omega_i\eta_i)|(1-\beta)[\mathbb{E}(\eta_i)^2-\mathbb{E}(\eta_i(\eta_i-1))]+\beta\mathbb{E}(\eta_i)|$ and
\begin{align*}
h_i(k)=\left\{\begin{array}{ll}
\frac{k(k-1)}{2}|(1-\beta)\mathbb{E}\eta_i p_i(k)+\beta k p_i(k)-(k+1)p_i(k+1)|, & \text{if $\omega_i=1$;}\\ 
\sum_{\ell=1}^{\omega_i k-1}|(1-\beta)\ell \mathbb{E}\eta_i+\beta\ell k-(\ell-1)k|p_i(k), & \text{if $\omega_i\ge 2$}.
\end{array}\right.
\end{align*}
\end{theorem}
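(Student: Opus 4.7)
The strategy is standard Stein methodology: by \eqref{8:power}, it suffices to bound $|\mathbb{E}[\mathcal{A}_Xg(W^*)]|$ uniformly over $g$ satisfying \eqref{8:bd}. Using $\alpha=(1-\beta)\mathbb{E}W^*$ (a consequence of \eqref{8:MM}) together with $W^*=\sum_i\omega_i\eta_i$, I would first decompose
\begin{equation*}
\mathbb{E}[\mathcal{A}_Xg(W^*)]=\sum_{i\in J}\omega_iE_i,\qquad E_i:=\mathbb{E}\bigl[((1-\beta)\mathbb{E}\eta_i+\beta\eta_i)g(W^*+1)\bigr]-\mathbb{E}[\eta_ig(W^*)],
\end{equation*}
so that each $E_i$ isolates the contribution of $\eta_i$. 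Since $\eta_i$ and $W_i:=W^*-\omega_i\eta_i$ are independent, conditioning on $\eta_i$ turns $E_i$ into an explicit double sum over $k$ involving $p_i(k)$ and $\mathbb{E}g(W_i+\omega_ik+\cdot)$.

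Next I would expose the discrete differences of $g$ via the telescoping identity $\mathbb{E}g(W_i+m)=\mathbb{E}g(W_i+1)+\sum_{\ell=1}^{m-1}\mathbb{E}\Delta g(W_i+\ell)$; the coefficient of the constant $\mathbb{E}g(W_i+1)$ cancels thanks to $\alpha+\beta\mathbb{E}W^*=\mathbb{E}W^*$, so that $E_i$ reduces to a linear combination of the quantities $\mathbb{E}\Delta g(W_i+\ell)$. The key smoothness input is
\begin{equation*}
\bigl|\mathbb{E}\Delta g(W_i+\ell)-\mathbb{E}\Delta g(W_i+\ell')\bigr|\le |\ell-\ell'|\gamma\|\Delta g\|,
\end{equation*}
obtained by iterating $|\mathbb{E}f(W_i+1)-\mathbb{E}f(W_i)|\le\|f\|_\infty\cdot 2d_{TV}(\mathcal{L}(W_i),\mathcal{L}(W_i+1))\le\|f\|_\infty\gamma$ with $f=\Delta g$.

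The two sub-cases then diverge in bookkeeping. For $\omega_i=1$, the shift $k\mapsto k-1$ in $\sum_kkp_i(k)\mathbb{E}g(W_i+k)$ collapses both sums into $\sum_kq_i(k)\mathbb{E}g(W_i+k+1)$ with $q_i(k):=[(1-\beta)\mathbb{E}\eta_i+\beta k]p_i(k)-(k+1)p_i(k+1)$; the identities $\sum_kq_i(k)=0$ and $\sum_kkq_i(k)=(1-\beta)[(\mathbb{E}\eta_i)^2-\mathbb{E}\eta_i(\eta_i-1)]+\beta\mathbb{E}\eta_i$ (exactly the quantity inside the absolute value of $d_i$) enable a second-order Taylor expansion of $\mathbb{E}g(W_i+k+1)$ about $\mathbb{E}g(W_i+1)$ with linear part subtracted, whose remainder is at most $\tfrac{k(k-1)}{2}\gamma\|\Delta g\||q_i(k)|=\gamma\|\Delta g\|h_i(k)$. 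For $\omega_i\ge 2$ the index shift is unavailable, so I would expand both $\mathbb{E}g(W_i+\omega_ik+1)$ and $\mathbb{E}g(W_i+\omega_ik)$ against the anchor $\mathbb{E}g(W_i+1)$, interchange the order of the $k$- and $\ell$-sums, and apply the smoothness bound termwise to each $\mathbb{E}\Delta g(W_i+\ell)$; direct rearrangement then produces the coefficient $(1-\beta)\ell(\mathbb{E}\eta_i-k)+k$ multiplying $p_i(k)$ for each $1\le\ell\le\omega_ik-1$, matching the integrand of $h_i(k)$.

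The main obstacle is handling the residual from the linear (anchor) part of the expansion, which takes the form $\sum_i\omega_i\tilde m_i\mathbb{E}\Delta g(W_i+1)$ with $\tilde m_i=\mathbb{E}\eta_i-\omega_i(1-\beta)\mathrm{Var}(\eta_i)$. The critical observation is the global cancellation
\begin{equation*}
\sum_{i\in J}\omega_i\tilde m_i=\mathbb{E}W^*-(1-\beta)\mathrm{Var}(W^*)=0,
\end{equation*}
which holds by the parameter matching \eqref{8:MM} and independence of the $\eta_i$. Anchoring $\mathbb{E}\Delta g(W_i+1)$ at the common reference $\mathbb{E}\Delta g(W^*+1)$ therefore rewrites the residual as $\sum_i\omega_i\tilde m_i[\mathbb{E}\Delta g(W_i+1)-\mathbb{E}\Delta g(W^*+1)]$; a further application of the smoothness bound to the bracket (using $W^*=W_i+\omega_i\eta_i$ and averaging over $\eta_i$) controls each difference by $\omega_i\mathbb{E}\eta_i\gamma\|\Delta g\|$, which supplies the extra $\mathbb{E}(\omega_i\eta_i)$ factor appearing in $d_i$. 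Summing over $i$ with weights $\omega_i$, invoking \eqref{8:bd} for $\|\Delta g\|$, and passing to the supremum in \eqref{8:power} then yields \eqref{8:ibd}.
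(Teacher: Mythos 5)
Your proposal is correct and follows essentially the same route as the paper: the same decomposition $\mathbb{E}[\mathcal{A}_Xg(W^*)]=\sum_i\omega_iE_i$ via $\alpha=(1-\beta)\mathbb{E}W^*$, the same exploitation of the independence of $\eta_i$ and $W_i$, the same cancellation of the anchor term through the moment matching \eqref{8:MM}, and the same smoothing input (your Lipschitz bound for $\mathbb{E}\Delta g(W_i+\cdot)$ is precisely the iterated form of the paper's bound $|\mathbb{E}\Delta^2 g(W_i+\cdot)|\le\gamma\|\Delta g\|$ from Barbour and Xia). The only cosmetic difference is that you keep first differences and a second-order Taylor remainder where the paper writes out the double sums of $\Delta^2 g$ explicitly; your version of the anchor coefficient $\tilde m_i=\mathbb{E}\eta_i-\omega_i(1-\beta)\mathrm{Var}(\eta_i)$ is in fact the self-consistent one for the global cancellation when $\omega_i\ge 2$.
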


\begin{remark}
Note that Remark 4.1 of Vellaisamy {et al.} \cite{VUC} can be applied to our results, and hence for $\gamma_j=\min\{1/2,1-d_{TV}(\omega_j\eta_j,\omega_j\eta_j+1)\}$, $\gamma^*=\max_{j \in J}\gamma_j$, we have
\begin{align*}
\gamma\le \sqrt{\frac{2}{\pi}}\left(\frac{1}{4}+\sum_{j\in J}\gamma_j-\gamma^*\right)^{-1/2}.
\end{align*} 
Therefore, if $\eta_J$ is of $O(n)$ then the bound \eqref{8:ibd} is of $O(n^{-1/2})$. Observe that the above bound for $\gamma$ is useful when $w_j=1$ for many values of $j$. 
\end{remark}

\begin{corollary}\label{8:cor1}
Assume the conditions of Theorem \ref{8:th2} hold with $X\sim$ PB$(N,p)$ and $\mathbb{E}W^*>\mathrm{Var}(W^*)$. Then 
\begin{align*}
d_{TV}(\mathcal{L}(W^*),\text{PB}(N,p))\le \frac{\gamma}{\floor{N} pq} \sum_{i\in J}\omega_i\left(\sum_{k=1}^{\infty}h_i(k)+d_i\right),
\end{align*}
where  $d_i=\mathbb{E}(\omega_i\eta_i)|\mathbb{E}(\eta_i)^2-\mathbb{E}(\eta_i(\eta_i-1))-p\mathbb{E}(\eta_i)|$ and
\begin{align*}
h_i(k)=\left\{\begin{array}{ll}
\frac{k(k-1)}{2}|\mathbb{E}\eta_i p_i(k)-pkp_i(k)-q(k+1)p_i(k+1)|, & \text{if $\omega_i=1$;}\\ 
\sum_{\ell=1}^{\omega_i k-1}|\ell \mathbb{E}\eta_i-p\ell k-q(\ell-1)k|p_i(k), & \text{if $\omega_i\ge 2$}.
\end{array}\right.
\end{align*}
\end{corollary}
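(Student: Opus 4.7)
The plan is to derive Corollary \ref{8:cor1} as a direct specialization of Theorem \ref{8:th2} to the pseudo-binomial case. First, I would translate the abstract parameters $(\alpha,\beta)$ into the concrete PB$(N,p)$ values. As stated after \eqref{8:com}, the pseudo-binomial Stein operator corresponds to $\alpha = Np/q$ and $\beta = -p/q$, so $1-\beta = 1/q$. The hypothesis $\mathbb{E}W^* > \mathrm{Var}(W^*)$ is exactly what makes the moment-matching choice in \eqref{8:MM} consistent with $0<p<1$ and $N>1$ (cf.\ Remark (ii) following Theorem \ref{8:th1}), so PB$(N,p)$ is well defined with the stated parameters.

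Next, I would substitute the bound $\|\Delta g\| \le 1/(\floor{N}p)$ from \eqref{8:bd} into the inequality \eqref{8:ibd} of Theorem \ref{8:th2}. This immediately gives
\[
d_{TV}(\mathcal{L}(W^*),\mathrm{PB}(N,p)) \le \frac{\gamma}{\floor{N}p}\sum_{i\in J}\omega_i\Bigl(\sum_{k=1}^{\infty}h_i(k)+d_i\Bigr),
\]
with $h_i,d_i$ as in Theorem \ref{8:th2}. It then remains to rewrite $h_i$ and $d_i$ using $1-\beta=1/q$ and $\beta=-p/q$. For $d_i$, factoring $1/q$ out of $(1/q)[\mathbb{E}(\eta_i)^2-\mathbb{E}(\eta_i(\eta_i-1))]-(p/q)\mathbb{E}(\eta_i)$ yields the claimed expression divided by $q$. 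Similarly, inside the absolute value in $h_i$, a common factor of $1/q$ can be pulled out in both the $\omega_i=1$ case (so that $(1/q)\mathbb{E}\eta_i p_i(k)-(p/q)kp_i(k)-(k+1)p_i(k+1)$ becomes $q^{-1}$ times $\mathbb{E}\eta_i p_i(k)-pkp_i(k)-q(k+1)p_i(k+1)$) and in the $\omega_i\ge 2$ case (so that $(1/q)\ell\mathbb{E}\eta_i-(p/q)\ell k-(\ell-1)k$ becomes $q^{-1}$ times $\ell\mathbb{E}\eta_i-p\ell k-q(\ell-1)k$).

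Combining the prefactor $1/(\floor{N}p)$ from $\|\Delta g\|$ with the pulled-out $1/q$ produces $1/(\floor{N}pq)$, and what remains inside the sum is precisely the $h_i$ and $d_i$ stated in the corollary. There is no real obstacle here: the only bookkeeping point to watch is that the factor $1/q$ inside the absolute value must be non-negative (which it is, since $0<q<1$) so it can be taken outside $|\,\cdot\,|$ without sign issues. Once this algebraic rewriting is done, the stated inequality follows.
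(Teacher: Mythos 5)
Your proposal is correct and matches the paper's (implicit) derivation exactly: the paper offers no separate proof of Corollary \ref{8:cor1}, treating it as the direct specialization of Theorem \ref{8:th2} obtained by setting $\beta=-p/q$ (so $1-\beta=1/q$), inserting $\|\Delta g\|\le 1/(\floor{N}p)$ from \eqref{8:bd}, and absorbing the common factor $1/q$ from $h_i$ and $d_i$ into the prefactor to get $\gamma/(\floor{N}pq)$. The algebra you carry out is precisely this bookkeeping, so nothing is missing.
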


\begin{remarks}
\begin{itemize}
\item[(i)] If $J=\{1,2,\ldots,n\}$, $\omega_i=1$, and $\eta_i\sim$ Ber$(p)$, for all $1\le i \le n$. Then, $h_i(k)=d_i=0$, and hence $d_{TV}(\mathcal{L}(W^*),\text{PB}(N,p))=0$, as expected.
\item[(ii)] Let $\omega_i=1$ and $\eta_i\sim$ Ber$(p_i)$, for $i\in J=\{1,2,\ldots,n\}$. Then, from Corollary \ref{8:cor1}, we have
\begin{align}
d_{TV}(\mathcal{L}(W^*),PB(N,p))\le\sqrt{\frac{2}{\pi}}\left(\frac{1}{4}+\sum_{i=1}^{n}\bar{\gamma}_i-\bar{\gamma}^*\right)^{-1/2}\frac{1}{\floor{N}pq}\sum_{i=1}^{n}p_i^2|p-p_i|,\label{8:www}
\end{align}
where $\bar{\gamma}_j=\frac{1}{2}\min\{1,1+p_i-|1-2p_i|\}$ and $\bar{\gamma}^*=\max_{1\le i\le n}\bar{\gamma}_i$. The bound given in \eqref{8:www} is of $O(n^{-1/2})$ and is an order improvement over Theorem 1 of Barbour and Hall \cite{BH}, Theorem 9.E of Barbour {et al.} \cite{BHJ}, and the bounds discussed by Kerstan \cite{KJ} and Le Cam \cite{CAM}.
\item[(iii)] Consider the setup of CDO discussed in Example \ref{8:ex2} under independent Bernoulli trials and unit weights, that is, $\overline{W}_n^*=\sum_{i=1}^{n}I_i$ where $I_i$, for $1\le i\le n$, are independent Bernoulli trials. Using $A_i=B_i=\{i\}$ in \eqref{8:mmmm}, routine calculations lead to 
\begin{align}
\big|\mathbb{E}[\mathscr{A}g(\overline{W}_n^*)]\big|\le \frac{(1+q)}{q^{\ceil{N}+1}}\sum_{i=1}^{n}p_i|p-p_i|,  \label{8:200}
\end{align}
where $p=\frac{1}{N}\sum_{i=1}^{n}p_i$. Note that if $p_i=p$ in \eqref{8:200}, for $1\le i\le n$, then $\big|\mathbb{E}[\mathscr{A}g(\overline{W}_n^*)]\big|=0$, as expected. Also, from \eqref{8:final} with $\omega_i=1$, for $1\le i \le n$, we have
\begin{align}
\big|\mathbb{E}[\mathscr{A}g(\overline{W}_n^*)]\big|\le \sqrt{\frac{2}{\pi}}\left(\frac{1}{4}+\sum_{i=1}^{n}\tilde{\gamma}_i-\tilde{\gamma}^*\right)^{-1/2}\frac{1+q}{q^{\ceil{N}+1}}\sum_{i=1}^{n}p_i^2|p-p_i|, \label{8:wwww}
\end{align}
where $\tilde{\gamma}_i=\frac{1}{2}\min\{1,1+p_i-|1-2p_i|\}$ and $\tilde{\gamma}^*=\max_{1\le i\le n}\bar{\gamma}_i$, $q=1-p=\sum_{i=1}^{n}p_iq_i/\sum_{i=1}^{n}p_i$, and $N=(\sum_{i=1}^{n}p_i)^2/\sum_{i=1}^{n}p_i^2$. For Poisson approximation, the existing bound given in (4) of Neammanee and Yonghint \cite{NY} is
\begin{align}
\big|\mathbb{E}[\mathscr{A}g(\overline{W}_n^*)]\big|\le \left(2\exp \left(\sum_{i=1}^{n}p_i\right)-1\right)\sum_{i=1}^{n}p_i^2.\label{8:300}
\end{align} 
Note that, for small values of $p_i$, the bound given in \eqref{8:200} is better than the bound given in \eqref{8:300}. For instance, let $n=50$ and $p_i$, $1\le i\le 50$, be defined as follows:
\begin{table}[H]
  \centering
  \begin{tabular}{cccccccc}
\toprule
$i$ & $p_i$ & $i$ & $p_i$ & $i$ & $p_i$\\
\midrule
0-10 & 0.05 & 21-30 & 0.15 &  41-50 & 0.25\\
11-20 & 0.10& 31-40 & 0.20 &\\
\bottomrule
\end{tabular}
\end{table}
Next, the following table gives a comparison between \eqref{8:200}, \eqref{8:wwww}, and \eqref{8:300}.
\begin{table}[H]
  \centering
  \begin{tabular}{llllllllll}
\toprule
$n$ & From \eqref{8:200} & From \eqref{8:wwww} & From \eqref{8:300}\\
\midrule
10  & $7.14 \times 10^{-17}$ & $6.00\times 10^{-18}$ & $0.0574$\\
20  & $0.3711$               & $0.01630$ & $0.9954$\\
30  & $4.9800$               & $0.36415$ & $13.7099$\\
40  & $111.8440$             & $11.7054$ & $221.8700$\\
50  & $3311.4600$            & $897.600$ & $4970.7400$\\
\bottomrule
\end{tabular}
\end{table}
Observe that our bounds are better than existing bounds for various values of $N$ and $p_i$. 
\end{itemize}
\end{remarks}

\begin{corollary}\label{8:cor2}
Assume the conditions of Theorem \ref{8:th2} hold with $X\sim$ NB$(r,\bar{p})$ and $\mathbb{E}W^*<\mathrm{Var}(W^*)$. Then 
\begin{align}
d_{TV}(\mathcal{L}(W^*),\text{NB}(r,\bar{p}))\le \frac{\gamma}{r \bar{q}} \sum_{i\in J}\omega_i\left(\sum_{k=1}^{\infty}h_i(k)+d_i\right),\label{8:100}
\end{align}
where $d_i=\mathbb{E}(\omega_i\eta_i)|\bar{p}[\mathbb{E}(\eta_i)^2-\mathbb{E}(\eta_i(\eta_i-1))]+\bar{q}\mathbb{E}(\eta_i)|$ and
\begin{align*}
h_i(k)=\left\{\begin{array}{ll}
\frac{k(k-1)}{2}|\bar{p}\mathbb{E}\eta_i p_i(k)+\bar{q} kp_i(k)-(k+1)p_i(k+1)|, & \text{if $\omega_i=1$;}\\ 
\sum_{\ell=1}^{\omega_i k-1}|\bar{p}\ell \mathbb{E}\eta_i+\bar{q}\ell k-(\ell-1)k|p_i(k), & \text{if $\omega_i\ge 2$}.
\end{array}\right.
\end{align*}
\end{corollary}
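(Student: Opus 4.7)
The plan is to obtain Corollary \ref{8:cor2} as a direct specialization of Theorem \ref{8:th2} to the negative binomial case, with no new probabilistic input beyond the identification of Stein parameters and the corresponding bound on $\|\Delta g\|$. The reasoning is essentially substitution, and the work lies in verifying that the parameter matching \eqref{8:MM} produces a legitimate negative binomial triple $(r,\bar{p},\bar{q})$ under the stated hypothesis.

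First I would invoke Theorem \ref{8:th2} with $X\sim\mathrm{NB}(r,\bar{p})$, reading off from the discussion following \eqref{8:com} that the corresponding Stein coefficients in \eqref{8:com} are $\alpha=r\bar{q}$ and $\beta=\bar{q}$, so that $1-\beta=\bar{p}$. The moment-matching conditions \eqref{8:MM} then force $\bar{q}=1-\mathbb{E}W^{*}/\mathrm{Var}(W^{*})$ together with $r\bar{q}/\bar{p}=\mathbb{E}W^{*}$, which uniquely determines $r$ and $\bar{p}$. The hypothesis $\mathbb{E}W^{*}<\mathrm{Var}(W^{*})$ guarantees $\bar{q}\in(0,1)$, so this is a bona fide negative binomial parametrization; this is exactly the content of Remark (ii) following Theorem \ref{8:th1}.

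Second, I would substitute $(1-\beta,\beta)=(\bar{p},\bar{q})$ into the expressions for $h_i(k)$ and $d_i$ appearing in Theorem \ref{8:th2}. The quantity $d_i=\mathbb{E}(\omega_i\eta_i)|(1-\beta)[\mathbb{E}(\eta_i)^{2}-\mathbb{E}(\eta_i(\eta_i-1))]+\beta\mathbb{E}(\eta_i)|$ becomes $\mathbb{E}(\omega_i\eta_i)|\bar{p}[\mathbb{E}(\eta_i)^{2}-\mathbb{E}(\eta_i(\eta_i-1))]+\bar{q}\mathbb{E}(\eta_i)|$, and the two cases of $h_i(k)$ transform into exactly the forms asserted in the statement of the corollary. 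Finally, I replace $\|\Delta g\|$ by its explicit negative binomial bound $1/(r\bar{q})$ from \eqref{8:bd}, which produces the prefactor $\gamma/(r\bar{q})$ in \eqref{8:100}.

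I do not expect a genuine obstacle here: everything reduces to a careful bookkeeping of the substitution $\alpha\mapsto r\bar{q}$, $\beta\mapsto\bar{q}$ in the master bound \eqref{8:ibd}, and the only mildly delicate point is recording the validity condition $\mathbb{E}W^{*}<\mathrm{Var}(W^{*})$, without which the negative binomial parameters extracted from \eqref{8:MM} would fail to lie in the admissible range. Accordingly, the proof can be written as a single short paragraph citing Theorem \ref{8:th2} and performing the specialization.
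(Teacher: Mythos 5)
Your proposal is correct and matches the paper's (implicit) derivation: Corollary \ref{8:cor2} is obtained exactly by substituting $\beta=\bar{q}$, $1-\beta=\bar{p}$ into the bound \eqref{8:ibd} of Theorem \ref{8:th2} and replacing $\|\Delta g\|$ by $1/(r\bar{q})$ from \eqref{8:bd}, with the hypothesis $\mathbb{E}W^*<\mathrm{Var}(W^*)$ ensuring the admissibility of the negative binomial parameters via \eqref{8:MM}. No further comment is needed.
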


\begin{remarks}
\begin{itemize}
\item[(i)] If $J=\{1,2,\ldots,n\}$ and $\omega_i=1$, for all $i$, then, from Corollary \ref{8:cor2}, we have
\begin{align}
d_{TV}(\mathcal{L}(W^*),\text{NB}(r,\bar{p}))\le \frac{\gamma}{r \bar{q}}  \sum_{i=1}^{n}\left(\sum_{k=2}^{\infty}\frac{k(k\hspace{-0.05cm}-\hspace{-0.05cm}1)}{2}|\bar{p}\mathbb{E}\eta_i p_i(k)\hspace{-0.05cm}+\hspace{-0.05cm}\bar{q}kp_i(k)\hspace{-0.05cm}-\hspace{-0.05cm}(k\hspace{-0.05cm}+\hspace{-0.05cm}1)p_i(k\hspace{-0.05cm}+\hspace{-0.05cm}1)|+d_i\right)\hspace{-0.05cm},\label{8:nbbd}
\end{align}
which is an improvement over the bound given in $(17)$ of Vellaisamy {et al.} \cite{VUC}. Also, if $\eta_i\sim $ Geo$(\bar{p})$, the geometric distribution, for $1\le i\le n$, then $d_{TV}(\mathcal{L}(W^*),\text{NB}(n,\bar{p}))=0$, as expected.
\item[(ii)] If $\eta_i\sim \text{NB}(n_i,p_i)$, $1\le i \le n$, then the bound given in \eqref{8:nbbd} leads to
\begin{align*}
d_{TV}(\mathcal{L}(W^*),\text{NB}(r,\bar{p}))\le \frac{\gamma^*}{r \bar{q}}  \sum_{i=1}^{n}\left(\bar{p}(n_iq_i+1)\left|\frac{q_i}{p_i}-\frac{\bar{q}}{\bar{p}}\right|\frac{n_i(n_i+1)q_i^2}{p_i^2}+d_i\right),
\end{align*}
where $\gamma^*\le \sqrt{\frac{2}{\pi}}\left(\frac{1}{4}+\sum_{i=1}^{n}\mathbb{P}(\eta_i=\floor{(n_i-1)q_i / p_i})-\max_{1\le i\le n}\mathbb{P}(\eta_i=\floor{(n_i-1)q_i / p_i})\right)^{-1/2}$ and $q_i=1-p_i$, which is an order improvement over the bound given in Theorem 3.1 of Teerapabolarn \cite{Teeraa2015}.
\end{itemize}
\end{remarks}

\begin{example}[Compound Poisson Distribution]
Let $w_i=i$, $\eta_i\sim \text{Po}(\lambda_i)$, the Poisson distribution, for $i\in J=\{1,2,\ldots,n\}$, and $S_n=\sum_{i=1}^{n}i\eta_i$. The distribution of $S_\infty$ is known as compound Poisson distribution. The mean and variance of $S_n$ satisfy $\sum_{i=1}^{n}i\lambda_i=\mathbb{E}S_n< \mathrm{Var}(S_n)=\sum_{i=1}^{n}i^2\lambda_i$. Therefore, the negative binomial approximation is suitable in the view of the applicability of parameters. Hence, from \eqref{8:100}, we have
\begin{align*}
 d_{TV}(\mathcal{L}(S_n),\text{NB}(r,\bar{p}))\le\sqrt{\frac{2}{\pi}}\frac{2}{r \bar{q}} \sum_{i=1}^{n}E_i,
\end{align*}
where $\bar{q}=\sum_{i=1}^{n}i(i-1)\lambda_i\big/ \sum_{i=1}^{n}i^2\lambda_i$ and $r=\left(\sum_{i=1}^{n}i\lambda_i\right)^2\big/ \sum_{i=1}^{n}i(i-1)\lambda_i$, and
\begin{align*}
E_i\le\left\{\begin{array}{ll}
\bar{q}\lambda_i^2(\lambda_i+1)+\bar{q}(i\lambda_i)^2, & \text{if $i=1$;}\\ 
\frac{1}{2}\mathbb{E}[i\eta_i(i\eta_i-1)(\bar{p}(i\eta_i +i\lambda_i)+2)]+\bar{q}(i\lambda_i)^2, & \text{if $i\ge 2$}.
\end{array}\right.
\end{align*} 
Note that if $i\lambda_i$ is decreasing in $i$ then the bound is useful in practice. For similar conditions, see Barbour {et al.} \cite{BCL} and Gan and Xia \cite{GX}.
\end{example}

\section{Proofs}\label{8:pr}
In this section, we prove the main results presented in Section \ref{8:MR}.

\begin{proof}[Proof of Theorem \ref{8:th1}]
Consider the Stein operator \eqref{8:com} and taking expectation with respect to $W$, we have
\begin{align*}
\mathbb{E}[\mathscr{A}_Xg(W)]&=\alpha \mathbb{E}[g(W+1)]+\beta \mathbb{E}[Wg(W+1)]-\mathbb{E}[Wg(W)]\\
&=(1-\beta)\left[\frac{\alpha}{1-\beta} \mathbb{E}[g(W+1)]- \mathbb{E}[Wg(W)]\right]+\beta \mathbb{E}[W\Delta g(W)].
\end{align*}
Using \eqref{8:MM}, the above expression leads to
\begin{align}
\mathbb{E}[\mathscr{A}_Xg(W)]&=(1-\beta)\left[\sum_{i\in J}\omega_i \mathbb{E}\eta_i \mathbb{E}[g(W+1)]- \sum_{i\in J}\omega_i \mathbb{E}[\eta_ig(W)]\right]+\beta \mathbb{E}[W\Delta g(W)].\label{8:mmmm}
\end{align}
Let $W_i=W-\sum_{j\in A_i}\omega_i \eta_i=W-\eta_{A_i}^{*}$ so that $\eta_i$ and $W_i$ are independent random variables. Therefore,
\begin{align}
\mathbb{E}[\mathscr{A}_Xg(W)]&=(1-\beta)\sum_{i\in J}\omega_i \mathbb{E}\eta_i \mathbb{E}[g(W+1)-g(W_i+1)]\nonumber\\
&~~~- (1-\beta)\sum_{i\in J}\omega_i \mathbb{E}[\eta_i(g(W)-g(W_i+1))]+\beta \mathbb{E}[W\Delta g(W)]\nonumber\\
&=(1-\beta)\sum_{i\in J}\omega_i \mathbb{E}\eta_i \mathbb{E}\Bigg[\sum_{j=1}^{\eta_{A_i}^{*}}\Delta g(W_i+j)\Bigg]\nonumber\\
&~~~- (1-\beta)\sum_{i\in J}\omega_i \mathbb{E}\Bigg[\eta_i\sum_{j=1}^{\eta_{A_i}^{*}-1}\Delta g(W_i+j)\Bigg]+\beta \sum_{i\in J}\omega_i \mathbb{E}[\eta_i\Delta g(W)].\label{8:mm1}
\end{align}
Next, let $W_i^*=W-\sum_{j\in B_i}\omega_i \eta_i=W-\eta_{B_i}^{*}$ so that $\eta_i$ and $\eta_{A_i}$ are independent of $W_i^*$. Also, observe that
\begin{align}
&(1-\beta)\left\{\sum_{i\in J}\omega_i \mathbb{E}\eta_i \mathbb{E}\Bigg[\sum_{j=1}^{\eta_{A_i}^{*}}1\Bigg]- \sum_{i\in J}\omega_i \mathbb{E}\Bigg[\eta_i\sum_{j=1}^{\eta_{A_i}^{*}-1}1\Bigg]\right\}+\beta \sum_{i\in J}\omega_i \mathbb{E}\eta_i\label{8:mm2}\\
&=(1-\beta)\left\{\sum_{i\in J}\omega_i \mathbb{E}\eta_i \mathbb{E}[\eta_{A_i}^{*}]- \sum_{i\in J}\omega_i \mathbb{E}[\eta_i(\eta_{A_i}^{*}-1)]\right\}+\beta \sum_{i\in J}\omega_i \mathbb{E}\eta_i\nonumber\\
&=(1-\beta)\left[\frac{1}{1-\beta}\sum_{i\in J}\omega_i \mathbb{E}\eta_i-\sum_{i\in J}\sum_{j\in A_i}\omega_i\omega_j(\mathbb{E}[\eta_i \eta_j]-\mathbb{E}\eta_i \mathbb{E}\eta_j)\right]\nonumber\\
&=(1-\beta)\left[\frac{1}{1-\beta}\mathbb{E}(W_n)- \mathrm{Var}(W_n)\right]=0.\nonumber
\end{align}
Multiply $\mathbb{E}[\Delta g(W+1)]$ in \eqref{8:mm2} and using the corresponding expression in \eqref{8:mm1}, we get
\vspace{-0.38cm}
\begin{align}
\mathbb{E}[\mathscr{A}_Xg(W)]&=(1-\beta)\sum_{i\in J}\omega_i \mathbb{E}\eta_i \mathbb{E}\Bigg[\sum_{j=1}^{\eta_{A_i}^{*}}(\Delta g(W_i+j)-\Delta g(W_i^*+1))\Bigg]\nonumber\\
&~~~- (1-\beta)\sum_{i\in J}\omega_i \mathbb{E}\Bigg[\eta_i\sum_{j=1}^{\eta_{A_i}^{*}-1}(\Delta g(W_i+j)-\Delta g(W_i^*+1))\Bigg]\nonumber\\
&~~~+\beta \sum_{i\in J}\omega_i \mathbb{E}[\eta_i(\Delta g(W)-\Delta g(W_i^*+1))]\nonumber\\
&~~~-\hspace{-0.08cm}\sum_{i\in J}\omega_i\{(1\hspace{-0.08cm}-\hspace{-0.08cm}\beta)[\mathbb{E}(\eta_i) \mathbb{E}(\eta_{A_i}^{*})\hspace{-0.08cm}-\hspace{-0.08cm}\mathbb{E}(\eta_i(\eta_{A_i}^{*}\hspace{-0.08cm}-\hspace{-0.08cm}1))]\hspace{-0.08cm}+\hspace{-0.08cm}\beta\mathbb{E}(\eta_i)\}\mathbb{E}[\Delta g(W+1)\hspace{-0.08cm}-\hspace{-0.08cm}\Delta g(W_i^*+1)]\nonumber\\
&=(1-\beta)\sum_{i\in J}\omega_i \mathbb{E}\eta_i \mathbb{E}\Bigg[\sum_{j=1}^{\eta_{A_i}^{*}}\sum_{\ell=1}^{\eta_{B_i\backslash A_i}^*+j-1}\Delta^2 g(W_i^*+\ell)\Bigg]\nonumber\\
&~~~- (1-\beta)\sum_{i\in J}\omega_i \mathbb{E}\Bigg[\eta_i\sum_{j=1}^{\eta_{A_i}^{*}-1}\sum_{\ell=1}^{\eta_{B_i\backslash A_i}^*+j-1}\Delta^2 g(W_i^*+\ell)\Bigg]\nonumber\\
&~~~+\beta \sum_{i\in J}\omega_i \mathbb{E}\Bigg[\eta_i\sum_{\ell=1}^{\eta_{B_i}^{*}-1}\Delta^2 g(W_i^*+\ell)\Bigg]\nonumber\\
&~~~-\sum_{i\in J}\omega_i\{(1-\beta)[\mathbb{E}(\eta_i) \mathbb{E}(\eta_{A_i}^{*})-\mathbb{E}(\eta_i(\eta_{A_i}^{*}-1))]+\beta\mathbb{E}(\eta_i)\}\mathbb{E}\left[\sum_{\ell=1}^{\eta_{B_i}^{*}}\Delta^2 g(W_i^*+\ell)\right]\nonumber\\
&=(1-\beta)\sum_{i\in J}\omega_i \mathbb{E}\eta_i \mathbb{E}\Bigg[\sum_{j=1}^{\eta_{A_i}^{*}}\sum_{\ell=1}^{\eta_{B_i\backslash A_i}^*+j-1}\mathbb{E}[\Delta^2 g(W_i^*+\ell)|\eta_{A_i},\eta_{B_i}]\Bigg]\nonumber\\
&~~~- (1-\beta)\sum_{i\in J}\omega_i \mathbb{E}\Bigg[\eta_i\sum_{j=1}^{\eta_{A_i}^{*}-1}\sum_{\ell=1}^{\eta_{B_i\backslash A_i}^*+j-1}\mathbb{E}[\Delta^2 g(W_i^*+\ell)|\eta_i,\eta_{A_i},\eta_{B_i}]\Bigg]\nonumber\\
&~~~+\beta \sum_{i\in J}\omega_i \mathbb{E}\Bigg[\eta_i\sum_{\ell=1}^{\eta_{B_i}^{*}-1}\mathbb{E}[\Delta^2 g(W_i^*+\ell)|\eta_{B_i}]\Bigg]\nonumber\\
&~~~-\hspace{-0.08cm}\sum_{i\in J}\omega_i\{(1\hspace{-0.08cm}-\hspace{-0.08cm}\beta)[\mathbb{E}(\eta_i) \mathbb{E}(\eta_{A_i}^{*})\hspace{-0.08cm}-\hspace{-0.08cm}\mathbb{E}(\eta_i(\eta_{A_i}^{*}\hspace{-0.08cm}-\hspace{-0.08cm}1))]\hspace{-0.08cm}+\hspace{-0.08cm}\beta\mathbb{E}(\eta_i)\}\mathbb{E}\left[\sum_{\ell=1}^{\eta_{B_i}^{*}}\mathbb{E}[\Delta^2 g(W_i^*\hspace{-0.08cm}+\hspace{-0.08cm}\ell)|\eta_{B_i}]\right]. \label{8:mm3}
\end{align}
Hence,
\begin{align}
\left|\mathbb{E}[\mathscr{A}_Xg(W)]\right|&\le  \|\Delta g\|\left\{ \frac{(1-\beta)}{2}\left[\sum_{i\in J} \omega_i\mathbb{E}\eta_i \mathbb{E}[\eta_{A_i}^*(2\eta_{B_i}^{*}-\eta_{A_i}^{*}-1)\mathcal{D}(W|\eta_{A_i},\eta_{B_i})]\right.\right.\nonumber\\
&~~~~~~~~~~~~~~~~\left.+\sum_{i\in J}\omega_i\mathbb{E}[\eta_i(\eta_{A_i}^{*}-1)(2\eta_{B_i}^{*}-\eta_{A_i}^{*}-2)\mathcal{D}(W|\eta_i,\eta_{A_i},\eta_{B_i})]\right]\nonumber\\
&~~~~~~~~~~~~~~~~+\sum_{i\in J}\omega_i\bigr|(1-\beta)[\mathbb{E}(\eta_i) \mathbb{E}(\eta_{A_i}^{*})-\mathbb{E}(\eta_i(\eta_{A_i}^{*}-1))]+\beta\mathbb{E}(\eta_i)\bigr|\mathbb{E}[\eta_{B_i}^{*}\mathcal{D}(W|\eta_{B_i})]\nonumber\\
&~~~~~~~~~~~~~~~~+\left.|\beta| \sum_{i\in J} \omega_i\mathbb{E}[\eta_i(\eta_{B_i}^{*}-1)\mathcal{D}(W|\eta_{B_i})]\right\}.\label{8:mmmd}
\end{align}
Using \eqref{8:power}, the proof follows.
\end{proof}

\begin{proof}[Proof of Theorem \ref{8:th2}]
Let $A_i=B_i=\{i\}$ then $\{\eta_i, i\in J\}$ becomes independent random variables, and $W_i=W_i^*$ is independent of $\eta_{A_i}=\eta_{B_i}=\eta_i$. Therefore, from \eqref{8:mm3}, we have
\begin{align*}
\mathbb{E}[\mathscr{A}_Xg(W^*)]&=(1-\beta)\sum_{i\in J}\omega_i \mathbb{E}\eta_i \mathbb{E}\Bigg[\sum_{j=1}^{\omega_i \eta_i}\sum_{\ell=1}^{j-1}\mathbb{E}[\Delta^2 g(W_i+\ell)]\Bigg]\\
&~~~- (1-\beta)\sum_{i\in J}\omega_i \mathbb{E}\Bigg[\eta_i\sum_{j=1}^{\omega_i \eta_i-1}\sum_{\ell=1}^{j-1}\mathbb{E}[\Delta^2 g(W_i+\ell)]\Bigg]\\
&~~~+\beta \sum_{i\in J}\omega_i \mathbb{E}\Bigg[\eta_i\sum_{\ell=1}^{\omega_i \eta_i-1}\mathbb{E}[\Delta^2 g(W_i+\ell)]\Bigg]\\
&~~~-\sum_{i\in J}\omega_i\{(1-\beta)[\mathbb{E}(\eta_i)^2-\mathbb{E}(\eta_i(\eta_{i}-1))]+\beta\mathbb{E}(\eta_i)\}\mathbb{E}\left[\sum_{\ell=1}^{\omega_i\eta_i}\mathbb{E}[\Delta^2 g(W_i+\ell)]\right]\\
&=(1-\beta)\sum_{i\in J}\omega_i\mathbb{E}\eta_i \sum_{k=1}^{\infty}\sum_{j=1}^{\omega_i k}\sum_{\ell=1}^{j-1}\mathbb{E}[\Delta^2 g(W_i+\ell)]p_i(k)\\
&~~~-(1-\beta)\sum_{i\in J}\omega_i \sum_{k=1}^{\infty}\sum_{j=1}^{\omega_i k-1}\sum_{\ell=1}^{j-1}k\mathbb{E}[\Delta^2 g(W_i+\ell)]p_i(k)\\
&~~~+\beta\sum_{i\in J}^{n}\omega_i \sum_{k=1}^{\infty}\sum_{\ell=1}^{\omega_i k-1}k \mathbb{E}[\Delta^2 g(W_i+\ell)]p_i(k)\\
&~~~-\sum_{i\in J}\omega_i\{(1-\beta)[\mathbb{E}(\eta_i)^2-\mathbb{E}(\eta_i(\eta_{i}-1))]+\beta\mathbb{E}(\eta_i)\}\sum_{k=1}^{\infty}\sum_{\ell=1}^{\omega_i k}\mathbb{E}[\Delta^2 g(W_i+\ell)]p_i(k)\\
&=\sum_{i\in J}\omega_i \sum_{k=1}^{\infty}\sum_{j=1}^{\omega_i k}\sum_{\ell=1}^{j-1}[(1-\beta)\mathbb{E}\eta_i+\beta k]\mathbb{E}[\Delta^2 g(W_i+\ell)]p_i(k)\\
&~~~-\sum_{i\in J}\omega_i \sum_{k=1}^{\infty}\sum_{j=1}^{\omega_i k-1}\sum_{\ell=1}^{j-1}k\mathbb{E}[\Delta^2 g(W_i+\ell)]p_i(k)\\
&~~~-\sum_{i\in J}\omega_i\{(1-\beta)[\mathbb{E}(\eta_i)^2-\mathbb{E}(\eta_i(\eta_{i}-1))]+\beta\mathbb{E}(\eta_i)\}\sum_{k=1}^{\infty}\sum_{\ell=1}^{\omega_i k}\mathbb{E}[\Delta^2 g(W_i+\ell)]p_i(k)\\
&=\sum_{i\in J}\omega_i \sum_{k=1}^{\infty}\sum_{\ell=1}^{\omega_i k-1}\ell [(1-\beta)\mathbb{E}\eta_i+\beta k] \mathbb{E}[\Delta^2 g(W_i+\omega_i k-\ell)]p_i(k)\\
&~~~-\sum_{i\in J}\omega_i \sum_{k=1}^{\infty}\sum_{\ell=1}^{\omega_i k-2}\ell k \mathbb{E}[\Delta^2 g(W_i+\omega_i k-\ell-1)]p_i(k)\\
&~~~-\sum_{i\in J}\omega_i\{(1-\beta)[\mathbb{E}(\eta_i)^2-\mathbb{E}(\eta_i(\eta_{i}-1))]+\beta\mathbb{E}(\eta_i)\}\sum_{k=1}^{\infty}\sum_{\ell=1}^{\omega_i k}\mathbb{E}[\Delta^2 g(W_i+\ell)]p_i(k).
\end{align*}
{\bf Case I}: If $\omega_i=1$ then
\begin{align}
\mathbb{E}[\mathscr{A}_Xg(W^*)]&=\sum_{i\in J}\sum_{k=2}^{\infty}\sum_{\ell=1}^{k-1}\ell [(1-\beta)\mathbb{E}\eta_i +\beta k]\mathbb{E}[\Delta^2 g(W_i+ k-\ell)]p_i(k)\nonumber\\
&~~~-\sum_{i\in J} \sum_{k=3}^{\infty}\sum_{\ell=1}^{k-2}\ell k \mathbb{E}[\Delta^2 g(W_i+k-\ell-1)]p_i(k)\nonumber\\
&~~~-\sum_{i\in J}\{(1-\beta)[\mathbb{E}(\eta_i)^2-\mathbb{E}(\eta_i(\eta_{i}-1))]+\beta\mathbb{E}(\eta_i)\}\sum_{k=1}^{\infty}\sum_{\ell=1}^{k}\mathbb{E}[\Delta^2 g(W_i+\ell)]p_i(k)\nonumber\\
&=\sum_{i\in J}\sum_{k=2}^{\infty}\sum_{\ell=1}^{k-1}\ell[(1-\beta)\mathbb{E}\eta_i p_i(k)+\beta kp_i(k)-(k+1)p_i(k+1)]\mathbb{E}[\Delta^2 g(W_i+ k-\ell)]\nonumber\\
&~~~-\sum_{i\in J}\{(1-\beta)[\mathbb{E}(\eta_i)^2-\mathbb{E}(\eta_i(\eta_{i}-1))]+\beta\mathbb{E}(\eta_i)\}\sum_{k=1}^{\infty}\sum_{\ell=1}^{k}\mathbb{E}[\Delta^2 g(W_i+\ell)]p_i(k).\label{8:mm4}
\end{align}
{\bf Case II}: If $\omega_i \ge 2$ then
\begin{align}
\mathbb{E}[\mathscr{A}g(W)]&=\sum_{i\in J}\omega_i \sum_{k=1}^{\infty}\sum_{\ell=1}^{\omega_i k-1}\ell [(1-\beta)\mathbb{E}\eta_i+\beta k] \mathbb{E}[\Delta^2 g(W_i+\omega_i k-\ell)]p_i(k)\nonumber\\
&~~~-\sum_{i\in J}\omega_i \sum_{k=1}^{\infty}\sum_{\ell=1}^{\omega_i k-2}\ell k \mathbb{E}[\Delta^2 g(W_i+\omega_i k-\ell-1)]p_i(k)\nonumber\\
&~~~-\sum_{i\in J}\omega_i\{(1-\beta)[\mathbb{E}(\eta_i)^2-\mathbb{E}(\eta_i(\eta_{i}-1))]+\beta\mathbb{E}(\eta_i)\}\sum_{k=1}^{\infty}\sum_{\ell=1}^{\omega_i k}\mathbb{E}[\Delta^2 g(W_i+\ell)]p_i(k)\nonumber\\
&=\sum_{i\in J}\omega_i \sum_{k=1}^{\infty}\sum_{\ell=1}^{\omega_i k-1}[(1-\beta )\ell \mathbb{E}\eta_i+\beta \ell k-(\ell-1)k]\mathbb{E}[\Delta^2 g(W_i+\omega_i k-\ell)]p_i(k)\nonumber\\
&~~~-\sum_{i\in J}\omega_i\{(1-\beta)[\mathbb{E}(\eta_i)^2-\mathbb{E}(\eta_i(\eta_{i}-1))]+\beta\mathbb{E}(\eta_i)\}\sum_{k=1}^{\infty}\sum_{\ell=1}^{\omega_i k}\mathbb{E}[\Delta^2 g(W_i+\ell)]p_i(k).\label{8:mm5}
\end{align}
It is shown that in Barbour and Xia \cite{BX} (see also Barbour and \v{C}ekanavi\v{c}ius \cite{BC}, p. 517) $|\mathbb{E}(\Delta^2 g(W_i+\cdot))|\le \gamma \|\Delta g\|$. Hence, from \eqref{8:mm4} and \eqref{8:mm5}, we have
\begin{align}
\left|\mathbb{E}[\mathscr{A}g(W)]\right|\le \gamma \|\Delta g\| \sum_{i\in J}\omega_i \left(\sum_{k=1}^{\infty}h_i(k)+d_i\right).\label{8:final}
\end{align}
Using \eqref{8:power}, the proof follows.
\end{proof}

\section*{Acknowledgement}
The author would like to thank Dr. Neelesh S. Upadhye for helpful comments and suggestions. The author is grateful to the associate editor and reviewers for many valuable suggestions, critical comments that improved the presentation of the paper.

\singlespacing
%\small
%\footnotesize
\bibliographystyle{PV}
\bibliography{PA2PSDBib}

\end{document}